\theoremstyle{plain}
\newtheorem{theorem}{Theorem}[section]
\newtheorem{lemma}[theorem]{Lemma}
\newtheorem{corollary}{Corollary}
\newtheorem{rem}{Remark}
\theoremstyle{definition}
\newtheorem{defn}{Definition}[section]
\title{}
\begin{document}
\title [generalized power sum and newton-girard identities] {generalized power sum and newton-girard identities}
\author[Sudip Bera]{Sudip Bera}
\author[Sajal Kumar Mukherjee]{Sajal Kumar Mukherjee}
\address[Sudip Bera]{Department of Mathematics, Indian Institute of Science, Bangalore 560 012, India.}
\email{sudipbera@iisc.ac.in}
\address[Sajal Kumar Mukherjee]{Department of Mathematics,  Indian Institute of Science, Bangalore 560 012, India.}
\email{sajalm@iisc.ac.in}

\keywords{ colored Newton-Girard identity; digraphs; generalized power sum identity.}
\subjclass[2010]{05A19; 05A05; 05C30; 05C38}
\maketitle
\begin{abstract}
In this article we prove an algebraic identity which significantly generalizes the formula for sum of powers of consecutive integers involving Stirling numbers of the second kind. Also we have obtained a generalization of Newton-Girard power sum identity.
\end{abstract}
\section{Introduction}
The sum of powers of consecutive integers has a long and fascinating history. Historically the first ever formula for the sum was obtained by Swiss mathematician 
Jacob Bernoulli (1654-1705), who proved the following:
\begin{equation}\label{power sum bernoli}
1^m+2^m+\cdots+ (n-1)^m=\frac{1}{m+1}\sum\limits_{k=0}^m\binom{m+1}{k}B_kn^{m+1-k}, m\geq 0, n\geq 1,
\end{equation}   
where $B_k^{'s}$ are the famous Bernoulli numbers. There is also a surprising relationship between the sum of powers and the Stirling numbers of second kind \cite{FQ}. In fact, 
\begin{equation}\label{power sum  stirling}
1^m+2^m+\cdots+ n^m=\frac{1}{m+1}\sum\limits_{k=0}^n\binom{n+1}{k+1}S(m,k)k!, \text{ where }
\end{equation}
\begin{equation}\label{stirling no}
 S(m, k)=\frac{1}{k!}\sum\limits_{j=1}^k (-1)^{k-j}\binom{k}{j}j^m.
\end{equation}
In \cite{FQ}, the author proved \eqref{power sum  stirling} along with its many generalizations using the so called binomial transform. In fact, the author proved the following general statement, and obtained various power sum identities as a corollary of the following:
\begin{lemma}[Lemma 2.1, \cite{FQ}]\label{fq lemma}
Let $c_1, c_2, \cdots, $ be a sequence of complex numbers. Then for every positive integer $m$, we have 
\begin{equation}
\sum\limits_{k=1}^{m}k^{\alpha}c_k=\sum\limits_{j=1}^{m}j!S(\alpha, j)\sum\limits_{k=j}^m \binom{k}{j}c_k.
\end{equation} 	
\end{lemma}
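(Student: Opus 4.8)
The plan is to expand each power $k^{\alpha}$ as a linear combination of binomial coefficients via the Stirling numbers of the second kind, substitute this expansion into the left-hand side, and then interchange the order of the two (finite) summations.

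First I would invoke the standard identity writing an ordinary power in terms of falling factorials,
\begin{equation*}
k^{\alpha}=\sum_{j=0}^{\alpha} S(\alpha, j)\,k^{\underline{j}}=\sum_{j=0}^{\alpha} j!\,S(\alpha, j)\binom{k}{j},
\end{equation*}
where $k^{\underline{j}}=k(k-1)\cdots(k-j+1)=j!\binom{k}{j}$. This identity has a transparent combinatorial proof: $k^{\alpha}$ counts all functions from an $\alpha$-element set into a $k$-element set, and sorting such functions by the size $j$ of their image yields $\binom{k}{j}$ choices for the image together with $j!\,S(\alpha, j)$ surjections onto it, which reproduces the right-hand side.

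Next I would substitute this expansion into the left-hand side and swap the order of summation. Since all sums are finite the interchange is unconditionally valid, and because $S(\alpha, 0)=0$ for $\alpha\geq 1$ the $j=0$ term drops out, giving
\begin{equation*}
\sum_{k=1}^{m}k^{\alpha}c_k=\sum_{k=1}^{m}\sum_{j=1}^{\alpha} j!\,S(\alpha, j)\binom{k}{j}c_k=\sum_{j=1}^{\alpha} j!\,S(\alpha, j)\sum_{k=1}^{m}\binom{k}{j}c_k.
\end{equation*}

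Finally I would reconcile the limits of summation with the stated form. Because $\binom{k}{j}=0$ whenever $k<j$, the inner sum may start at $k=j$ without changing its value, producing $\sum_{k=j}^{m}\binom{k}{j}c_k$. For the outer index, replacing the upper limit $\alpha$ by $m$ alters nothing: every term with $j>\alpha$ has $S(\alpha, j)=0$, and every term with $j>m$ has an empty inner sum, so only vanishing terms are added or removed. This yields exactly
\begin{equation*}
\sum_{k=1}^{m}k^{\alpha}c_k=\sum_{j=1}^{m}j!\,S(\alpha, j)\sum_{k=j}^{m}\binom{k}{j}c_k.
\end{equation*}
The substitution and interchange are routine; the only point demanding care is this bookkeeping of summation ranges, and I do not expect any genuine obstacle.
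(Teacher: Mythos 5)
Your proof is correct: the expansion $k^{\alpha}=\sum_{j}j!\,S(\alpha,j)\binom{k}{j}$, the interchange of finite sums, and the bookkeeping of the ranges (using $S(\alpha,j)=0$ for $j>\alpha$ and $\binom{k}{j}=0$ for $k<j$) are all handled properly. However, this is a genuinely different route from the paper's. The paper does not prove the lemma directly; it obtains it as the specialization $x_i^{(j)}=1$, $y_{\ell}=c_{\ell-1}$, $r=\alpha$ of its multivariate Theorem \ref{main thm}, which is itself proved by interpreting the left-hand side as the sum of all ``good words'' $x^{(1)}_{i_1}\cdots x^{(r)}_{i_r}y_t$ with each $i_p<t$ in a free monoid, partitioning these words by their support set $U$, and applying inclusion--exclusion over subsets $V\subset U\setminus\{\mathrm{Max}(U)\}$. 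Under the specialization, that inclusion--exclusion collapses precisely to the alternating sum $\sum_{v}(-1)^{j-v}\binom{j}{v}v^{\alpha}=j!\,S(\alpha,j)$ defining the Stirling numbers, so your argument can be viewed as the shadow of the paper's at the level of word counts. What your approach buys is brevity and self-containment (it rests only on the classical Stirling expansion of a power into falling factorials); what the paper's approach buys is the full multivariate identity of Theorem \ref{main thm}, of which the lemma is only the simplest specialization.
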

In this article, we prove a general identity, which proves Lemma \ref{fq lemma}, for positive integer $\alpha$ as a corollary and consequently many other well known power sum identities. Before stating our result, let us fix some notations. 

Let \[\{x_i^{(j)}: 1\leq j\leq r, 1\leq i\leq m\} \text{ and } \{y_{\ell}, 2\leq \ell \leq m+1\}\] be two sets of variables, and $P\subset [n] (\text{where } [n]=\{1, 2, \cdots, n\}).$ Define \[\Pi_r P=\prod_{j=1}^r\left(\sum\limits_{i\in P}x_i^{(j)}\right),\] and for any finite set $Q$ of positive integers, the maximum element of $Q$ is denoted by Max($Q$). Then we have the following:
\begin{theorem}\label{main thm}
\[\sum\limits_{k=1}^m \Pi_r[k]y_{k+1}=\sum\limits_{U\subset [m+1], |U|\geq 2}\left(\sum\limits_{\varnothing \neq V\subset U\setminus \{\text{Max}(U)\}}(-1)^{|U|-|V|-1}\Pi_rV\right)y_{\text{Max}(U)}.\]	
\end{theorem}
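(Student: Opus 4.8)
The plan is to prove the identity by comparing, for each index $\ell$ with $2 \le \ell \le m+1$, the coefficient of the single variable $y_\ell$ on the two sides. Both sides are manifestly linear combinations of $y_2, \ldots, y_{m+1}$ whose coefficients are polynomials in the $x_i^{(j)}$, so it suffices to check that these coefficients agree for each $\ell$.

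On the left-hand side the variable $y_\ell$ occurs only in the summand indexed by $k = \ell-1$, so its coefficient is exactly $\Pi_r[\ell-1] = \Pi_r\{1, 2, \ldots, \ell-1\}$. On the right-hand side a subset $U$ contributes to $y_\ell$ precisely when $\text{Max}(U) = \ell$, that is, when $\ell \in U$ and $U \setminus \{\ell\} \subseteq [\ell-1]$. Writing $U = S \cup \{\ell\}$ with $\varnothing \ne S \subseteq [\ell-1]$ (nonemptiness being forced by $|U| \ge 2$), one has $U \setminus \{\text{Max}(U)\} = S$ and $|U| - |V| - 1 = |S| - |V|$, so the coefficient of $y_\ell$ on the right equals
\[
\sum_{\varnothing \ne S \subseteq [\ell-1]} \; \sum_{\varnothing \ne V \subseteq S} (-1)^{|S|-|V|} \, \Pi_r V.
\]

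The decisive step is to evaluate this double sum by interchanging the order of summation: fix $V$ and sum over all $S$ with $V \subseteq S \subseteq [\ell-1]$. Putting $T = S \setminus V$, the inner sum becomes $\sum_{T \subseteq [\ell-1]\setminus V} (-1)^{|T|} = 0^{(\ell-1)-|V|}$, which vanishes unless $V = [\ell-1]$ and equals $1$ in that unique surviving case. Hence the double sum collapses to $\Pi_r[\ell-1]$, agreeing with the left-hand side; summing over $\ell$ then yields the stated identity.

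I expect the main obstacle to be purely combinatorial bookkeeping rather than any deep idea: correctly reindexing the outer sum over $U$ as a sum over pairs $(S,\ell)$ with $U = S \cup \{\ell\}$, verifying the cardinality simplification $|U| - |V| - 1 = |S| - |V|$, and justifying the interchange of summation so that the alternating inner sum is recognized as the standard vanishing $\sum_{T \subseteq W}(-1)^{|T|} = 0$ valid for every nonempty finite set $W$. Once the signs and cardinalities are tracked carefully, the telescoping is immediate and the result follows.
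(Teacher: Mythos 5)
Your proof is correct. Every step checks out: the identity is linear in the $y_\ell$, the coefficient of $y_\ell$ on the left is $\Pi_r[\ell-1]$, the reindexing $U=S\cup\{\ell\}$ with $\varnothing\neq S\subseteq[\ell-1]$ and the sign simplification $|U|-|V|-1=|S|-|V|$ are right, and the interchange of summation reduces the inner sum to $\sum_{T\subseteq[\ell-1]\setminus V}(-1)^{|T|}$, which vanishes unless $V=[\ell-1]$ (the constraint $S\neq\varnothing$ is automatic since $S\supseteq V\neq\varnothing$). Your route differs from the paper's in direction and framing, though not in its underlying engine. The paper reads the identity left to right: it interprets the left side as the sum of all ``good words'' $x^{(1)}_{i_1}\cdots x^{(r)}_{i_r}y_t$ with each $i_p<t$ in a free monoid algebra, partitions these words according to the exact set of indices appearing (together with $t$), and applies inclusion--exclusion to each block $G_U$ to produce the inner alternating sum. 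You instead read the identity right to left and collapse the alternating double sum by a summation interchange. Both arguments are instances of inclusion--exclusion on the Boolean lattice; the paper's version explains how the right-hand side is \emph{discovered} (as an exact-support decomposition of the monomials), while yours is shorter, purely mechanical, and arguably easier to verify line by line.
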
 
 We call this theorem ``the generalized power sum theorem''. Note that, if we put $x_i^{(j)}=1$ for all $1\leq j\leq r, 1\leq i\leq m,$ and $y_{\ell}=c_{\ell-1}$ and $r=\alpha$ in our Theorem \ref{main thm}, we obtain Lemma \ref{fq lemma} for positive integer $\alpha.$ In particular, if we put $x^{(j)}_i=1=y_{\ell}$ for all $1\leq j\leq r, 1\leq i\leq m$ and $2\leq \ell\leq m+1,$ we obtain the classical formula for the sum of powers \eqref{power sum  stirling}.  
 
Newton-Girard identity is a very important result occurring many places in algebra and combinatorics. A combinatorial proof of Newton-Girard identity was first given by Doron Zeilberger in \cite{11}. In \cite {NEWGIRARD}, the present authors gave a graph theoretic formulation of the Newton-Girard identity exhibiting a relation between weighted sum of closed walks and weighted sum of linear subdigraphs (defined later) of a weighted digraph $\Gamma.$ In this paper, based on \cite {NEWGIRARD}, we give a ``colored'' version of the graph theoretic formulation mentioned above and as a corollary we obtain a significant generalization of the classical Newton-Girard identity. Before proceeding to the statement of our theorem, let us define some graph theoretic notions. 
 \begin{defn}
A weighted \emph{$k$-colored digraph}, denoted by $\Gamma_{k, C}$ is a digraph equipped with a finite set $C=\{c_1, c_2, \cdots, c_k\}$ called the set of \emph{colors} such that for any ordered pair of vertices $(i, j)$ in $\Gamma_{k, C},$ either there are no directed edges from $i$ to $j$ or there are precisely $k$ directed edges from $i$ to $j$ each receiving distinct colors  from  $C$ (i.e. no two of the directed edges from $i$ to $j$ receive the same color) and the edge from $i$ to $j$ with color $c_r$ is assigned a nonzero weight $a_{ij}^{(r)}.$ 
\end{defn}
\begin{defn}
A \emph{colored linear subdigraph} $\gamma$ of a weighted $k$-colored digraph $\Gamma_{k, C}$ is a collection of pairwise vertex-disjoint cycles such that any two different edges of $\gamma$ have different colors. Define $\text{Col}(\gamma)$ to be the set of colors of the edges in $\gamma.$ $L(\gamma)$ is defined to be the length of $\gamma$ i.e. the number of edges in $\gamma.$ 
The weight of a colored linear subdigraph $\gamma,$ written as $W(\gamma)$, is the product of the weights of all its edges. The number of cycles contained in $\gamma$ is denoted by $c(\gamma).$ The set of all colored linear subdigraphs of $\Gamma_{k, C}$ is denoted by $CLSD(\Gamma_{k, C}).$ 		
\end{defn}
\begin{rem}
Colored digraphs have appeared in the literature before. See, for example \cite{MIXEDDIS}. Here we have defined this notion in a way, suitable to our purpose.  	
\end{rem}	
\begin{defn}
A \emph{colored closed walk} $w$ of length $L(w)=m$ in a weighted $k$-colored digraph $\Gamma_{k, C}$ is a sequences of vertices $x_0, x_1, \cdots, x_{m-1}, x_m$ such that $x_0=x_m$ and for each $0\leq i\leq m-1,$ there is a directed edge from $x_i$ to $x_{i+1},$ and for $i\neq j$ the color of the directed edge from $x_i$ to $x_{i+1}$ is distinct from that of the directed edge from $x_j$ to $x_{j+1}.$ Define $\text{Col}(w)$ to be the set of colors of the edges in $w.$ The weight $W(w)$ of a colored closed walk $w$ is the product of all weights of the edges present in that walk. The set of all colored closed walks of $\Gamma_{k, C}$ is denoted by $CCW(\Gamma_{k, C}).$
\end{defn}
Let $\Gamma_{k, C}$ be a weighted $k$-colored digraph and $S$ and $T$ be subsets of $C$ such that $|S|=p$ and $|T|=q.$  We define the following:
\begin{equation*}\label{weighted color lsd}
\ell_{p, S}\triangleq
\begin{cases}
\sum\limits_{\substack{\gamma\in CLSD(\Gamma_{k, C})\\ \text{ such that }\\ L(\gamma)=p,\\ \text{ and Col}(\gamma)=S}} (-)^{c(\gamma)}W(\gamma), & \text{ if } S\neq \varnothing  \\
1, & \text{ if } S =\varnothing \text{ or equivalently } p=0\\
0,&\text{ if there  } \nexists \text{ any } \gamma \in CLSD\text{ such that } \text{ Col}(\gamma)=S.
\end{cases}
\end{equation*}
\begin{equation*}\label{weighted color closed walk}
c_{q, T}\triangleq
\begin{cases}
\sum\limits_{\substack{w\in CCW(\Gamma_{k, C})\\ \text{ such that }\\ L(w)=q,\\ \text{ and Col}(w)=T}} W(w), & \text{ if } T\neq \varnothing  \\
1, & \text{ if } T =\varnothing \text{ or equivalently } q=0\\
0,&\text{ if there  } \nexists \text{ any } w \in CCW\text{ such that } \text{ Col}(w)=T.
\end{cases}
\end{equation*}
Now we have the following:
\begin{theorem}\label{colored nwtn thm}
Let $\Gamma_{k, C}$ be a weighted $k$-colored digraph. Then 
\begin{enumerate}
 \item 
 $\sum\limits_{p+q=r, S\cap T=\varnothing}c_{q, T}\ell_{p, S}=0$, if $r>n $
 \item
 $\sum\limits_{p+q=r, q>0, S\cap T=\varnothing}c_{q, T}\ell_{p, S}+r\ell_{r, C}=0$, if $r\leq n.$
 \end{enumerate}  	
 \end{theorem}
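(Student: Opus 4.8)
The plan is to prove a sharper, color-set--by--color-set refinement of both assertions by a sign-reversing involution in the spirit of Zeilberger's combinatorial proof \cite{11} (extending the graph-theoretic formulation of \cite{NEWGIRARD}), and then to recover parts (1) and (2) by summing over color sets. The starting point is that, since each colored linear subdigraph and each colored closed walk carries all of its edges in distinct colors, one has $L(\gamma)=|\mathrm{Col}(\gamma)|$ and $L(w)=|\mathrm{Col}(w)|$. Hence in $\sum_{p+q=r,\,S\cap T=\varnothing}c_{q,T}\ell_{p,S}$ the pair $(S,T)$ ranges precisely over the ordered disjoint decompositions $S\sqcup T=C'$ of the size-$r$ color subsets $C'\subseteq C$, and it suffices to fix one such $C'$ and prove
\[
\sum_{\substack{p+q=r,\ q>0\\ S\sqcup T=C'}} c_{q,T}\,\ell_{p,S}\;+\;r\,\ell_{r,C'}=0,
\]
where $\ell_{r,C'}=\sum_{\gamma:\,L(\gamma)=r,\ \mathrm{Col}(\gamma)=C'}(-1)^{c(\gamma)}W(\gamma)$; the term $r\,\ell_{r,C}$ in the theorem is then read as the total fixed-point contribution $r\sum_{|C'|=r}\ell_{r,C'}$.

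To prove the displayed identity I would work on the set $\mathcal{P}$ of pairs $(\gamma,w)$ with $\gamma\in CLSD(\Gamma_{k,C})$, $w\in CCW(\Gamma_{k,C})$ of length $L(w)\ge 1$, subject to $\mathrm{Col}(\gamma)\cap\mathrm{Col}(w)=\varnothing$ and $\mathrm{Col}(\gamma)\cup\mathrm{Col}(w)=C'$ (so automatically $L(\gamma)+L(w)=r$), each weighted by $(-1)^{c(\gamma)}W(\gamma)W(w)$; the left-hand sum above is exactly $\sum_{(\gamma,w)\in\mathcal{P}}(-1)^{c(\gamma)}W(\gamma)W(w)$. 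On $\mathcal P$ define an involution $\Phi$ by scanning the based walk $w=x_0x_1\cdots x_q$ from $x_0$ and locating the least index $j$ at which either $x_j$ repeats an earlier vertex of $w$ or $x_j$ lies on a cycle of $\gamma$. In the first case the newly closed loop is a simple cycle $\kappa$; if it is a proper sub-loop ($j<q$) excise it from $w$ and adjoin it to $\gamma$. In the second case splice the cycle of $\gamma$ through $x_j$ into the walk at $x_j$ and delete it from $\gamma$. These operations are mutually inverse, they change $c(\gamma)$ by exactly one and so reverse the sign, and they manifestly preserve $W(\gamma)W(w)$; color-distinctness within the walk and within $\gamma$ and the cross-disjointness $\mathrm{Col}(\gamma)\cap\mathrm{Col}(w)=\varnothing$ are preserved automatically because $\Phi$ relocates only entire cycles whose colors are already disjoint from their destination.

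The fixed points of $\Phi$ are exactly the pairs in which $w$ is a simple cycle that is vertex- and color-disjoint from $\gamma$, so that its only closure occurs at the final step $j=q$. For such a pair $\delta:=\gamma\cup w$ is a colored linear subdigraph with $L(\delta)=r$, $\mathrm{Col}(\delta)=C'$, $c(\delta)=c(\gamma)+1$, and $W(\delta)=W(\gamma)W(w)$. Conversely each such $\delta$ is hit by exactly $r$ fixed points: choose which of its cycles plays the role of $w$ and choose a rooting of that cycle, contributing $\sum_{\kappa}L(\kappa)=L(\delta)=r$ based walks, each of sign $-(-1)^{c(\delta)}$. Summing, the fixed points contribute $-r\,\ell_{r,C'}$ while all other terms cancel in sign-reversing pairs, which yields the displayed identity. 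Summing over all $C'$ with $|C'|=r$ gives part (2). For part (1), if $r>n$ then no linear subdigraph can have length $r$ (it would need $r>n$ vertex-disjoint cycle vertices), so every $\ell_{r,C'}$ vanishes; the fixed-point contribution is $0$ and the $q=0$ terms $\sum_{|S|=r}\ell_{r,S}$ vanish for the same reason, whence the full sum (now including $q=0$) is zero.

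The main obstacle is making $\Phi$ a genuine involution: one must handle the base vertex $x_0$ carefully, confirm that the first completed loop is always simple, and verify that excision and splicing are exact inverses (the first event on the shortened walk must re-identify the just-excised $\kappa$, which follows because $\kappa$ was the first loop and so no earlier vertex can meet it). Everything else—weight preservation, the sign flip, and color compatibility—is routine once whole cycles are the unit of transfer. As an independent check one can run the generating-function argument: set $A(t)=\sum_{r=1}^{k}t_r A^{(r)}$, note that $\ell_{p,S}$ and $c_{q,T}$ are the squarefree coefficients of $\det(I-zA(t))$ and of $\sum_{q\ge1}\mathrm{tr}(A(t)^q)z^q$ respectively, and extract the coefficient of $z^r\prod_{c\in C'}t_c$ from the logarithmic-derivative relation $\big(\sum_{q\ge1}\mathrm{tr}(A(t)^q)z^q\big)\det(I-zA(t))=-z\,\partial_z\det(I-zA(t))$, which reproduces the same per-color-set identity.
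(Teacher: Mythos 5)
Your proof is correct and takes essentially the same route as the paper: the identical sign-reversing involution on pairs $(w,\gamma)$ that either excises the first completed loop of $w$ into $\gamma$ or splices the first-met cycle of $\gamma$ into $w$, with the same count of $r$ fixed points per length-$r$ colored linear subdigraph accounting for the $r\ell_{r,C}$ term (and the observation that $\ell_{r,S}=0$ when $r>n$ disposing of part (1)). Your per-color-set refinement and the generating-function cross-check are harmless additions; the paper simply runs the same involution on the aggregate sum.
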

 As a corollary of this theorem we obtain the following very generalized form of classical Newton-Girard identity.	
 \begin{theorem}\label{col newtn identity}
 Let $r$ and $n$ be two positive integers and $\{\alpha^{(i)}_j:1\leq j\leq n, 1\leq i\leq r\}$ be a set of variables. Then our theorem states the following: 	
 \begin{enumerate}
 	\item
If $r>n, \sum\limits_{k=0}^{r}(-1)^k\sum \limits_{1\leq i_1<i_2\cdots<i_{r-k}\leq r}(r-k)!\left(\sum\limits_{j=1}^n\alpha_j^{(i_1)} \alpha_j^{(i_2)}\cdots \alpha_j^{(i_{r-k})}\right)X=0, $
 \item
If $r\leq n, \sum\limits_{k=0}^{r-1}(-1)^k\sum \limits_{1\leq i_1<i_2\cdots<i_{r-k}\leq r}(r-k)!\left(\sum\limits_{j=1}^n\alpha_j^{(i_1)} \alpha_j^{(i_2)}\cdots \alpha_j^{(i_{r-k})}\right)  X+rY=0,$
 \end{enumerate}
 where \begin{align*}
 &X=\sum\limits_{\substack{(i_1^{'}, \cdots, i_k^{'}) \\ \text{ such that each }\\ i_{\ell}^{'}\in [r]\setminus \{i_1, \cdots, i_{r-k}\}\\ \text{ and } i_p^{'}\neq i_q^{'} \text{ for }p\neq q}}\sum\limits_{1\leq j_1<j_2<\cdots<j_k\leq n}\alpha_{j_1}^{(i_1^{'})}\alpha_{j_2}^{(i_2^{'})}\cdots \alpha_{j_k}^{(i_k^{'})}, \text{ and }\\
 &Y= \sum\limits _{\substack{(i_1^{'}, \cdots, i_r^{'})\\ \text{ such that each } \\i^{'}_{p} \in [r] \text{ and }\\ i^{'}_{p}\neq i^{'}_{q}  \text{ for } p\neq q}} \sum\limits_{1\leq j_1<j_2<\cdots< j_r\leq n} \alpha^{(i_1^{'})}_{j_1}\alpha^{(i_2^{'})}_{j_2}\cdots\alpha^{(i_r^{'})}_{j_r}.
 \end{align*}	
 \end{theorem}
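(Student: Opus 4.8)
The plan is to deduce Theorem~\ref{col newtn identity} from Theorem~\ref{colored nwtn thm} by specializing the latter to one carefully chosen weighted $r$-colored digraph. First I would build the digraph $\Gamma_{r,C}$ whose vertex set is $[n]$, whose color set is $C=\{c_1,\dots,c_r\}$ (so $k=r$), and whose only edges are loops: at each vertex $j\in[n]$ I place, for every color $c_i$, a loop at $j$ colored $c_i$ and weighted $a_{jj}^{(i)}=\alpha_j^{(i)}$, with no edges between distinct vertices. This is admissible, since the only ordered pair carrying edges is $(j,j)$, which carries exactly $r$ loops of distinct colors. The number of vertices is $n$, so the dichotomy $r>n$ versus $r\le n$ in Theorem~\ref{colored nwtn thm} matches the two cases of the corollary verbatim.

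Next I would evaluate the two families $c_{q,T}$ and $\ell_{p,S}$ for this digraph. Because every edge is a loop, a colored closed walk cannot leave its starting vertex; hence a walk $w$ with $L(w)=q$ and $\mathrm{Col}(w)=T$ lives at a single vertex $j$ and consists of the $q=|T|$ loops colored by $T$ traversed in some order. Each of the $q!$ orderings has the same weight $\prod_{i\in T}\alpha_j^{(i)}$, so
$$c_{q,T}=q!\sum_{j=1}^{n}\prod_{i\in T}\alpha_j^{(i)}=(r-k)!\sum_{j=1}^{n}\alpha_j^{(i_1)}\cdots\alpha_j^{(i_{r-k})}$$
once we write $q=r-k$ and $T=\{i_1<\cdots<i_{r-k}\}$. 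Likewise a colored linear subdigraph is a set of vertex-disjoint loops; one with $L(\gamma)=p$ and $\mathrm{Col}(\gamma)=S$ is the data of $p$ distinct vertices $j_1<\cdots<j_p$ together with a bijection assigning the colors of $S$ to them, and it has $c(\gamma)=p$ cycles. Summing $(-1)^{c(\gamma)}W(\gamma)$ over all such $\gamma$ reproduces $(-1)^{p}X$, the inner color sum over orderings of $S$ being exactly the sum over distinct tuples $(i_1',\dots,i_p')$ in $S=[r]\setminus\{i_1,\dots,i_{r-k}\}$. In particular $\ell_{r,C}=(-1)^{r}Y$.

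Finally I would substitute these evaluations into Theorem~\ref{colored nwtn thm} and reindex. Setting $p=k$, $q=r-k$, letting $T$ run over all $(r-k)$-subsets of $[r]$ and $S=C\setminus T$ over their complements, the constraint $p+q=r,\ S\cap T=\varnothing$ becomes a sum over $k$ and over $i_1<\cdots<i_{r-k}$, and the product $c_{q,T}\ell_{p,S}$ becomes $(-1)^{k}(r-k)!\bigl(\sum_{j}\alpha_j^{(i_1)}\cdots\alpha_j^{(i_{r-k})}\bigr)X$. For $r>n$ no linear subdigraph can occupy $r$ distinct vertices, so the top ($k=r$) term vanishes and part~(1) of Theorem~\ref{colored nwtn thm} yields part~(1) of the corollary; for $r\le n$ the excluded $q=0$ contribution is restored as the weighted top term $r\ell_{r,C}$, giving part~(2).

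The main obstacle I anticipate is the sign and multiplicity bookkeeping rather than anything structural: one must check that the $q!$ orderings of the closed-walk loops produce precisely the factorial $(r-k)!$, that the loop subdigraphs biject with (increasing-vertex-tuple, color-ordering) pairs so that $\ell_{p,S}=(-1)^{p}X$, and that the exponent $c(\gamma)=p$ in $(-1)^{c(\gamma)}$ aligns with the $(-1)^{k}$ of the corollary and with the sign carried by the $\ell_{r,C}=(-1)^{r}Y$ term. Verifying that the loop-only digraph is genuinely covered by the definitions—in particular that closed walks are forced onto a single vertex and that the cycles of a linear subdigraph can only be loops—is routine but is the one place where the specialization must be argued rather than merely computed.
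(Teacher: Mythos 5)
Your construction is exactly the digraph the paper itself uses (at each of the $n$ vertices one loop per color, the loop of color $c_i$ at $v_j$ weighted $\alpha_j^{(i)}$, no other edges), and your evaluations $c_{q,T}=(r-k)!\sum_j\prod_{i\in T}\alpha_j^{(i)}$ and $\ell_{p,S}=(-1)^pX$ supply precisely the bookkeeping the paper leaves implicit, so the proposal is correct and follows the paper's route. One remark worth recording: your (correct) identity $\ell_{r,C}=(-1)^rY$ makes the restored term $r(-1)^rY$, which is also what the classical corollary demands once one notes $e_r=(-1)^r\sigma_r$ there; the bare $+rY$ in the printed statement of the theorem appears to have dropped this sign, so your careful sign-tracking is not pedantry but catches a slip in the statement itself.
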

 Note that if we put $\alpha^{(i)}_j=\alpha_j,$ for all $1\leq j\leq n$ and $1\leq i\leq r$ in Theorem \ref{col newtn identity}, we immediately obtain the following: 
 \begin{corollary}[Newton-Girard identity]
Let $\alpha_1, \alpha_2, \ldots, \alpha_n$ be roots of the polynomial $f(x)=x^n+e_1x^{n-1}+e_2x^{n-2}+\cdots+ e_tx^{n-t}+\cdots+ e_n$. Suppose $p_r=\alpha_1^r+\alpha_2^r+\cdots+\alpha_n^r$  $(r=0, 1, \cdots )$. Then Newton-Girard identity says that
\begin{enumerate}
 \item 
If $r>n,  p_r+e_1p_{r-1}+e_2p_{r-2}+\cdots+p_1e_{r-1}+e_np_{r-n}=0$
 \item
If $r\leq n,  p_r+e_1p_{r-1}+e_2p_{r-2}+\cdots+p_1e_{r-1}+re_r=0.$	
 \end{enumerate}
 \end{corollary}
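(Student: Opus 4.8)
The plan is to obtain the Corollary as a direct specialization of Theorem \ref{col newtn identity}, exactly as the note preceding the statement suggests: set $\alpha_j^{(i)}=\alpha_j$ for all $1\le j\le n$ and $1\le i\le r$. Under this substitution every quantity in Theorem \ref{col newtn identity} collapses to a symmetric function of $\alpha_1,\dots,\alpha_n$, and the entire content of the argument is the bookkeeping that recognizes these as the power sums $p_s=\alpha_1^s+\cdots+\alpha_n^s$ and the coefficients $e_t$ of $f$. So the proof is organized verification rather than a new idea.

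First I would simplify the weight factor. After the substitution, the product $\alpha_j^{(i_1)}\alpha_j^{(i_2)}\cdots\alpha_j^{(i_{r-k})}$ becomes $\alpha_j^{\,r-k}$, independently of which indices $i_1<\cdots<i_{r-k}$ are chosen, so
\[
\sum_{j=1}^n\alpha_j^{(i_1)}\cdots\alpha_j^{(i_{r-k})}=\sum_{j=1}^n\alpha_j^{\,r-k}=p_{r-k}.
\]
Since this no longer depends on the chosen index set, the outer sum $\sum_{1\le i_1<\cdots<i_{r-k}\le r}$ merely contributes a factor $\binom{r}{r-k}$.

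Next I would identify $X$ and $Y$. Under the substitution the summand $\alpha_{j_1}^{(i'_1)}\cdots\alpha_{j_k}^{(i'_k)}$ in $X$ becomes $\alpha_{j_1}\cdots\alpha_{j_k}$, so the inner sum $\sum_{1\le j_1<\cdots<j_k\le n}\alpha_{j_1}\cdots\alpha_{j_k}$ is precisely the $k$-th elementary symmetric function of the roots, which up to sign is the coefficient $e_k$ of $f$. Because the summand is now independent of the ordered tuple $(i'_1,\dots,i'_k)$, the outer sum over ordered tuples of distinct indices merely counts them, giving $k!$; thus $X$ reduces to $k!$ times a coefficient of $f$, and the same computation applied to $Y$ gives $r!$ times the top coefficient. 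Assembling the pieces, the $k$-th term of the first sum carries the factor $(-1)^k\binom{r}{r-k}(r-k)!\,k!$, and the identity $\binom{r}{r-k}(r-k)!\,k!=r!$ lets me pull out a common $r!$ from every term. Dividing through by $r!$ then yields the stated identities, with the range $k=0,\dots,r$ in part (1) truncating at $k=n$ because $e_k=0$ for $k>n$.

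The main obstacle is the sign and normalization bookkeeping, and in particular pinning down the anomalous last term. Under the specialization the generic term matches $r!\,e_k\,p_{r-k}$ (up to the elementary-symmetric-to-coefficient sign), and in part (1) the extreme index $k=r$ carries the weight $p_0=\sum_{j=1}^n1=n$, which is harmless since it is multiplied by $e_r=0$ when $r>n$. In part (2), by contrast, the sum stops at $k=r-1$ and the missing $k=r$ term is supplied instead by $rY$; this is exactly the mechanism by which the coefficient of $e_r$ becomes $r$ rather than $n=p_0$, reproducing the classical correction term $re_r$. This replacement of $rY$ for the would-be $q=0$ contribution is precisely the difference between the two cases of Theorem \ref{colored nwtn thm}, so I would verify separately that $rY$ specializes to $r!\,r\,e_r$ with the correct sign. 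That final check is the one point where the two cases genuinely diverge, and it is where I would be most careful.
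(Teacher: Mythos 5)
Your proposal is correct in outline and is exactly the paper's route: the paper derives the corollary from Theorem \ref{col newtn identity} by the single substitution $\alpha_j^{(i)}=\alpha_j$ and records no further computation, so your bookkeeping (the factor $\binom{r}{r-k}(r-k)!\,k!=r!$, the identification of the inner sums with $p_{r-k}$ and with the elementary symmetric functions, and the truncation at $k=n$ in part (1) because $e_k=0$ for $k>n$) is precisely the verification the authors leave implicit.

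One remark on the check you deferred. Carried out against the literal statement of Theorem \ref{col newtn identity}, $Y$ specializes to $r!\,\sigma_r=(-1)^r r!\,e_r$, where $\sigma_r$ denotes the $r$-th elementary symmetric function of the roots; hence $rY$ becomes $(-1)^r r\cdot r!\,e_r$ rather than the needed $r\cdot r!\,e_r$. The discrepancy is a sign slip in the paper's own statement rather than in your argument: the last term should read $r(-1)^rY$, because $\ell_{r,C}$ carries the sign $(-1)^{c(\gamma)}=(-1)^r$, which the unsigned $Y$ omits (for the $X$-terms the analogous sign is written explicitly as $(-1)^k$). With that sign restored your computation closes exactly as you describe, so your instinct to be most careful at precisely that point was well placed.
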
	  
 \section{Proof of the theorems }
 In this section we prove the generalized power sum theorem. As a recipe to do so, let us describe some terminology. Let $A=\{a_1, a_2, \cdots, a_n\}$ be a finite set. Think $A$ to be the set of \emph{letters}. The \emph{freee monoid} $A^{*}$ is the set of all finite sequences of elements of $A,$ usually called \emph{words} with the operation of concatenation. Construct an algebra from $A^{*}$ by taking formal sum of elements of $A$ with coefficient in $\mathbb{Z},$ extending the multiplication by usual distributivity. For example, in this algebra,
 \begin{align*}
  (a_1+a_2)a_3&=a_1a_3+a_2a_3\\ (a_1+a_2)(a_1+a_2)&=a_1a_1+a_1a_2+a_2a_1+a_2a_2.
  \end{align*} 
 \begin{proof}[Proof of Theorem \ref{main thm}]
Let \[L=\{x_i^{(j)}, 1\leq j\leq r, 1\leq i\leq m\}\cup \{y_{\ell}, 2\leq \ell\leq m+1\}.\] Take $L$ to be the set of letters. Then the left hand side of Theorem \ref{main thm} can be interpreted as the sum of all words in $L^{*}$ of the form $x^{(1)}_{i_1}x^{(2)}_{i_2}\cdots x^{(r)}_{i_r}y_t,$ where each $i_p<t,$ for all $1\leq p\leq r.$ Let us call the word of this form, \emph{good word}, and let $G$ be the set of all good words. Now let us evaluate the sum of all good words in another way. For any $U\subset [m+1]$ with $|U|\geq 2,$ define $G_U=\{x^{(1)}_{i_1}x^{(2)}_{i_2} \cdots x^{(r)}_{i_r}y_t: \text{ each } i_p\in U\setminus \text{Max}(U), t=\text{ \text{Max}(U) }, \text{ and for any } u\in U\setminus\text{Max}(U), \text{there exists } q\in [r]: i_q=u \}.$ It is clear that \[G=\bigcup_{U\subset[m+1], |U|\geq 2} G_U.\] Now by the principle of inclusion and exclusion, the sum of all words in $G_U$  is \[\left(\sum\limits_{\varnothing \neq T\subset U\setminus \{\text{Max}(U)\}} (-1)^{|U|-|T|-1}\Pi_rT\right)y_{\text{Max(U)}}.\] This completes the proof. 

Now we proceed to the proof of Theorem \ref{colored nwtn thm}. Before getting into the proof we need the following:

Let $\Gamma$ be a digraph (not necessarily colored). A walk $w$ in $\Gamma$ is defined to be a sequence of vertices $w=v_0, v_1, \cdots, v_t$ such that for each $i\in [0, t-1]\cap \mathbb{Z},$ there is a directed  edge from $v_i$ to $v_{i+1}.$ Now let $w_1=v_0, v_1, \cdots, v_t$ and $w_2=v_t, v_{t_1}, \cdots, v_{t_k}$ be two walks  in $\Gamma.$ Then the \emph{concatenation} of $w_1$ and $w_2,$ denoted by $w_1\bigodot w_2,$ is the walk $v_0, v_1, \cdots, v_t, v_{t_1}, \cdots, v_{t_k}.$   
 \end{proof} 
\begin{proof}[Proof of Theorem \ref{colored nwtn thm}] 
First we prove the case $r>n$. To prove this, consider all ordered pairs $(w, \gamma)$, where $w$ is a colored closed walk and $\gamma$ is a colored linear subdigraph (possibly empty), such that $L(w)+L(\gamma)=r$ and $\text{ Col}(w)\cap \text{ Col}(\gamma)=\varnothing.$ Define the weight $W$ of $(w, \gamma)$ to be $W((w, \gamma))=(-1)^{c(\gamma)}W(w)W(\gamma)$. Note that the left hand side of $(1)$ in Theorem \ref{colored nwtn thm} is precisely equal to $\sum\limits_{(w, \gamma)}W((w, \gamma))$, where the summation runs over all ordered pairs $(w, \gamma)$ as described above.

Now the crucial observation is that,  since $r>n,$ either $w$ and $\gamma$ share a common vertex or $w$ is not a ``simple" closed walk (here simple means the graph structure of the closed walk is a directed cycle). Now take a particular pair $(w, \gamma)$ satisfying the above conditions. Suppose that $x$ is the initial and terminal vertex of $w$. Start moving from $x$ along $w$.  There are two possibilities: either, first we meet a vertex $y$ which is a vertex of $\gamma$ or, we complete a closed directed cycle $\acute{w}$ which is a subwalk of $w$ and during this journey from $x$ up to the completion of $\acute{w}$ we have not met any vertex of $\gamma$. Now if the first case holds, we form a new ordered pair $(\tilde{w}, \tilde{\gamma})$, where $\tilde{w}=\widehat{xy}|_{w}\bigodot \gamma_{y}\bigodot \widehat{yx}|_{w}$ and $\tilde{\gamma}=\gamma\setminus\{\gamma_y\}$, where $\widehat{xy}|_{w}$ is the walk from $x$ to $y$ along $w$ and $\gamma_y$ is the directed cycle of $\gamma$ containing the vertex $y$. Note that $W((\tilde{w}, \tilde{\gamma}))=-W((w, \gamma))$. Now if the second case holds, then form a new ordered pair $(\tilde{\tilde{w}}, \tilde{\tilde{\gamma}})$, where $\tilde{\tilde{w}}$ is formed by removing the directed cycle $\acute{w}$ from $w$ and $\tilde{\tilde{\gamma}}$ is $\gamma\cup \acute{w}$. Note also that $W((\tilde{\tilde{w}}, \tilde{\tilde{\gamma}}))=-W((w, \gamma))$. It is easy to see that,  this is in fact a sign reversing involution by additionally noting that $\text{ Col}(\tilde{w})\cap\text{ Col}(\tilde{\gamma})=\varnothing$ and $\text{ Col}(\tilde{\tilde{w}})\cap\text{ Col}(\tilde{\tilde{\gamma}})=\varnothing.$ This completes the first part of the proof.

Now we prove the case $r\leq n$. Let $B=\{ (w, \gamma):w$ is a colored closed walk of length $\geq 1, \gamma$ is a colored linear subdigraph (possibly empty), $L(w)+L(\gamma)=r \text{ and }\text{ Col}(w)\cap \text{ Col}(\gamma)=\varnothing\}.$ Consider the following sum $D=\sum\limits_{(w, \gamma)\in B}W((w, \gamma))+r\ell_{r, C}.$ Note that  the left hand side of $(2)$ in Theorem \ref{colored nwtn thm} is precisely equal to $D$.

Consider the subset of $B$ consisting of ordered pair $(w, \gamma)$ satisfying the conditions: either $w\cap \gamma\neq \varnothing$ or $w$ is not a simple closed walk. Call this subcollection \emph{BAD}. So the \emph{GOOD} members of $B$ are the ordered pairs $(w, \gamma)$ satisfying $w\cap \gamma=\varnothing$ and $w$ is a colored simple closed walk. Now observe that, the weights of the BAD members cancel among themselves just like the previous case (case, $r>n$). Now let us see,  how a GOOD member looks like. As a directed graph it is just a disjoint collection of distinct cycles with vertex set, say, $\{ v_1, v_2, \cdots, v_r\}$ i.e. it is a colored linear subdigraph $\dot{\gamma}$ with vertex set $\{v_1, v_2, \cdots, v_r\}$. Now for this $\dot{\gamma}$ with vertex set $\{v_1, v_2, \cdots, v_r\}$, we claim that there are precisely $r$ GOOD members $(w, \gamma)$. For the proof, take any vertex say $v_i$ from $\dot{\gamma}$. Consider the cycle $w$ in $\dot{\gamma}$ containing the vertex $v_i$. Let $\gamma_1=\dot{\gamma}\setminus \{ w\}$. Now the cycle $w$ can be thought of as a closed walk $w_{v_i}$ starting and ending at the vertex $v_i$. So we get a GOOD member $(w_{v_i}, \gamma_1)$. Since $v_i$ is arbitrary the claim follows.

The main observation is that the sum of the weights of all the GOOD members, found in this way from $\dot{\gamma}$ is $r(-1)^{c(\dot{\gamma})-1}w(\dot{\gamma})$. This cancels with the term $r(-1)^{c(\dot{\gamma})}w(\dot{\gamma})$ in the equation $D=\sum\limits_{(w, \gamma)\in B}W((w, \gamma))+r\ell_{r, C}$.	
\end{proof}
\begin{proof}[Proof of Theorem \ref{col newtn identity}] The proof immediately follows by applying Theorem \ref{colored nwtn thm} to the weighted colored digraph $\Gamma_{r, C}$ defined as follows:

The vertex set of the graph $\Gamma_{r, C}$ is $V(\Gamma_{r, C})=\{v_1, v_2, \cdots, v_n\}$ and $C=\{c_1, c_2, \cdots, c_r\}$ is the set of colors. For each vertex $v_j$ there are precisely $r$ directed edges $e^{(1)}_j, e_j^{(2)}, \cdots, e^{(r)}_j$ from $v_j$ to itself such that $e^{(k)}_j$ is colored with the color $c_k,$ for all $k=1, 2, \cdots, r.$ For $j\neq j^{'}$ there is no directed edge from $v_j$ to $v_{j^{'}}.$ Also for any $j,$ the edge from $v_j$ to itself with color $c_i,$ is given a weight $\alpha^{(i)}_j.$  	 
\end{proof}	    
\subsection*{Acknowledgement} The first author was supported by Department of Science and Technology grant EMR/2016/006624 and partly supported by  UGC Centre for Advanced Studies. Also the first author was supported by NBHM Post Doctoral Fellowship grant 0204/52/2019/RD-II/339. The second author was supported by NBHM Post Doctoral Fellowship grant 0204/3/2020/RD-II/2470.
\bibliographystyle{amsplain}
\bibliography{gen-inv-lcp}
\end{document}